\newtheorem{theorem}{Theorem}
\newtheorem{lemma}[theorem]{Lemma}
\newtheorem{corollary}[theorem]{Corollary}
\theoremstyle{definition}
\theoremstyle{remark}
\numberwithin{equation}{section}
\newcommand{\Z}{{\mathbb Z}}
\newcommand{\N}{{\mathbb N}}
\begin{document}

\title[Translated sums of primitive sets]{Translated sums of primitive sets}

\author[J. D. Lichtman]{Jared Duker Lichtman}
\address{Mathematical Institute, University of Oxford, Oxford, OX2 6GG, UK}

\email{jared.d.lichtman@gmail.com}

\date{May 7, 2023.}


\begin{abstract}
The Erd\H{o}s primitive set conjecture states that the sum $f(A) = \sum_{a\in A}\frac{1}{a\log a}$, ranging over any primitive set $A$ of positive integers, is maximized by the set of prime numbers. Recently Laib, Derbal, and Mechik proved that the translated Erd\H{o}s conjecture for the sum $f(A,h) = \sum_{a\in A}\frac{1}{a(\log a+h)}$ is false starting at $h=81$, by comparison with semiprimes. In this note we prove that such falsehood occurs already at $h= 1.04\cdots$, and show this translate is best possible for semiprimes. We also obtain results for translated sums of $k$-almost primes with larger $k$.
\end{abstract}

\subjclass[2010]{Primary 11N25, 11Y60; Secondary 11A05, 11M32}

\keywords{primitive set, primitive sequence, prime zeta function, $k$-almost prime}

\maketitle

\section{Introduction}

A set $A\subset\Z_{>1}$ of positive integers is called {\it primitive} if no member divides another (we trivially exclude the singleton $\{1\}$). An important family of examples is the set $\N_k$ of {\it $k$-almost primes}, that is, numbers with exactly $k$ prime factors counted with multiplicity. For example, $k=1,2$ correspond to the sets of primes and semiprimes, respectively. 

In 1935 Erd\H{o}s \cite{Erdos35} proved that $f(A) = \sum_{a\in A}\frac{1}{a\log a}$ converges uniformly for any primitive $A$. In 1986 he further conjectured \cite[Conjecture 2.1]{ErdosLimoges} that the maximum of $f(A)$ is attained by the primes $A=\N_1$. One may directly compute $f(\N_1)=1.636\cdots$, whereas the best known bound is $f(A)<e^\gamma=1.781\cdots$ for any primitive $A$ \cite{LPprim}.  As a special case, Zhang \cite{zhang2} proved that the primes maximize $f(\N_k)$, that is, $f(\N_1) > f(\N_k)$ for all $k>1$. 

One may pose a translated analogue of the Erd\H{o}s conjecture, namely, the maximum of $f(A,h) = \sum_{a\in A}\frac{1}{a(\log a+h)}$ is attained by the primes $A=\N_1$. Recently Laib, Derbal, and Mechik \cite{LDM} proved that this translated conjecture is false, by showing $f(\N_1,h)<f(\N_2,h)$ for all $h\ge81$. Their proof method is direct, by studying partial sum truncations of $f(A,h)$. (Laib \cite{Laib} very recently announced a bound $h\ge60$, as a refinement of the same method.)

By different methods, we extend the range of such falsehood down to $h> 1.04\cdots$, and show this translate is best possible for semiprimes.

\begin{theorem}\label{thm:h2}
Let $P(s)=\sum_p p^{-s}$ denote the prime zeta function. We have $f(\N_1,h)<f(\N_2,h)$ if and only if $h > h_2$, where $t=h_2=1.04\cdots$ is the unique real solution to
\begin{align*}
\int_1^\infty \Big[P(s) - \tfrac{1}{2}\big(P(s)^2+P(2s)\big)\Big]e^{(1-s)t}\dd{s} \ = \ 0.
\end{align*}
Moreover $f(\N_1,h)<f(\N_k,h)$ for all $k$ sufficiently large, if and only if $h>h_\infty$, where $t=h_\infty=0.803\cdots$ is the unique real solution to
\begin{align*}
\int_1^\infty P(s)e^{(1-s)t}\dd{s} \ = \ 1.
\end{align*}

\end{theorem}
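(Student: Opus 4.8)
The plan is to pass to an integral transform of the Dirichlet series of the $k$-almost primes.

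\smallskip
\noindent\emph{Setup.} Write $P_k(s) := \sum_{\Omega(n)=k}n^{-s}$, so $P_1 = P$. Using $\frac{1}{\log a+h} = \int_0^\infty a^{-u}e^{-hu}\dd{u}$ for $a>1$, $h\ge 0$, and substituting $s=1+u$,
\[
f(\N_k,h) \;=\; \int_1^\infty P_k(s)\,e^{(1-s)h}\dd{s}.
\]
As $z^{\Omega(n)}n^{-s}$ is completely multiplicative, $\sum_{k\ge 0}P_k(s)z^k = \prod_p(1-zp^{-s})^{-1} = \exp\!\big(\sum_{j\ge 1}\tfrac{z^j}{j}P(js)\big)$, so $P_2(s) = \tfrac12\big(P(s)^2+P(2s)\big)$. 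Hence $f(\N_1,h)-f(\N_2,h) = \int_1^\infty\phi(s)e^{(1-s)h}\dd{s} =: F(h)$, where $\phi(s) := P(s) - \tfrac12\big(P(s)^2+P(2s)\big)$ is the bracketed integrand of the theorem; and $f(\N_1,h) = \int_1^\infty P(s)e^{(1-s)h}\dd{s}$, so the second displayed equation reads $f(\N_1,h_\infty) = 1$.

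\smallskip
\noindent\emph{The semiprime comparison.} I would first show $\phi$ changes sign exactly once on $(1,\infty)$, from negative to positive at some $\sigma_0>1$. Write $\phi(s) = P(s)\big(1-\tfrac12 g(s)\big)$ with $g(s):=P(s)+P(2s)/P(s)$; since $g(s)\to\infty$ as $s\to 1^+$ and $g(s)\to 0$ as $s\to\infty$, it suffices that $g$ is strictly decreasing, and indeed $g'(s) = P'(s)\big(1-P(2s)/P(s)^2\big) + 2P'(2s)/P(s) < 0$ because $P'<0$ on $(1,\infty)$ and $P(s)^2 = \sum_{p,q}(pq)^{-s} > \sum_p p^{-2s} = P(2s)$. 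With the single sign change in hand, $\phi(s)(s-\sigma_0)\ge 0$ for all $s$, so
\[
G(h) \;:=\; e^{(\sigma_0-1)h}F(h) \;=\; \int_1^\infty \phi(s)\,e^{(\sigma_0-s)h}\dd{s}
\]
has $G'(h) = \int_1^\infty\phi(s)(\sigma_0-s)e^{(\sigma_0-s)h}\dd{s} < 0$. Thus $G$ is strictly decreasing from $G(0) = F(0) = f(\N_1)-f(\N_2) > 0$ (Zhang's inequality) to $-\infty$, hence has a unique zero $h_2$; since $F$ has the same sign as $G$, we get $f(\N_1,h)<f(\N_2,h) \iff F(h)<0 \iff h>h_2$, with $h_2$ the unique root of the first displayed equation.

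\smallskip
\noindent\emph{The large-$k$ comparison.} It suffices to prove $\lim_{k\to\infty}f(\N_k,h) = 1$ for each fixed $h>0$: then $h_\infty$, characterised by $f(\N_1,h_\infty)=1$, is unique (as $h\mapsto f(\N_1,h)$ decreases strictly from $f(\N_1,0)=1.636\cdots$ to $0$); for $h>h_\infty$ we have $f(\N_1,h)<1=\lim_k f(\N_k,h)$, so $f(\N_k,h)>f(\N_1,h)$ for all large $k$, while for $h<h_\infty$ the reverse holds and the property fails. For the limit, form
\[
\Phi(z) \;:=\; \sum_{k\ge 0}f(\N_k,h)z^k \;=\; \int_0^\infty e^{-ht}\prod_p\big(1-zp^{-1-t}\big)^{-1}\dd{t}\qquad(|z|<1),
\]
the interchange being valid for $|z|<1$ by Tonelli (the $t$-integrand is $\asymp t^{-|z|}$ near $0$, hence integrable; it tends to $1$ as $t\to\infty$). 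One has $\prod_p(1-zp^{-1-t})^{-1} = t^{-z}G(z) + O(t^{1-\Re z})$ as $t\to 0^+$, locally uniformly in $z$, where $G(z):=\lim_{t\to 0^+}t^z\prod_p(1-zp^{-1-t})^{-1}$ is analytic in $|z|<2$ and $G(1) = \lim_{t\to 0^+}t\,\zeta(1+t) = 1$. Splitting $\int_0^\infty = \int_0^1+\int_1^\infty$, subtracting $t^{-z}G(z)$ from the integrand on $(0,1)$, and observing that both error integrals are analytic in $|z|<2$, one finds that $\Phi$ continues meromorphically to $|z|<2$ with a single pole there, a simple pole at $z=1$ of residue $-G(1)=-1$ (supplied by the factor $\int_0^1 t^{-z}e^{-ht}\dd{t} = \sum_{m\ge 0}\frac{(-h)^m}{m!(m+1-z)}$). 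Since $\Phi(z)+(z-1)^{-1}$ is then analytic in $|z|<2$, we conclude $f(\N_k,h) = [z^k]\Phi(z) = 1+O(r^{-k})$ for every $r<2$.

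\smallskip
\noindent\emph{Main obstacle.} The semiprime case is short once the one-sign-change of $\phi$ is in hand. The substance is the limit $\lim_k f(\N_k,h)=1$ — the meromorphic continuation of $\Phi$ past $|z|=1$ and the isolation of the pole at $z=1$ — together with the boundary case $h=h_\infty$, where $f(\N_1,h_\infty)=1=\lim_k f(\N_k,h_\infty)$ and one must pin down the sign of the correction. The expected expansion $f(\N_k,h) = 1 - h\,2^{-k}+O(3^{-k})$ (heuristically $f(\N_k,h)\approx\sum_{j\ge 0}(-h)^j(j+1)^{-k}$) gives $f(\N_k,h_\infty)<1$ for all large $k$, so the ``only if'' direction survives at $h=h_\infty$; making this rigorous amounts to following $\Phi$ to its next singularity, at $z=2$.
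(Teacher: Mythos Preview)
Your argument is correct, and both halves differ from the paper's in instructive ways.

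For the semiprime comparison, you and the paper share the same monotonicity device for $h\mapsto e^{(\sigma_0-1)h}F(h)$ once the single sign change of $\phi=P-P_2$ is in hand, but you establish that sign change differently. The paper proves it as the $k=2$ case of a lemma covering $P_k-P$ for all $2\le k\le 20$: it brackets the root between auxiliary points $s_k<t_k<s'_k$ defined by $P(s_k)^{k-1}=k!$, $P_k(t_k)=2^{-t_k}+3^{-t_k}$, $P_{k-1}(s'_k)=1$, and then \emph{verifies numerically} that $s_k<t_k<s'_k$. Your factorisation $\phi=P\bigl(1-\tfrac12 g\bigr)$ with $g=P+P(2\cdot)/P$ and the one-line check $g'<0$ is cleaner and fully analytic; the trade-off is that it is special to $k=2$, whereas the paper's more laborious route is reused for $2\le k\le 20$ in its proof of Theorem~\ref{thm:min}.

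For the large-$k$ part, the paper is much shorter: since $n\ge 2^k$ for $n\in\N_k$, one has $f(\N_k,h)\sim f(\N_k,0)$ as $k\to\infty$ uniformly in $h\ge 0$, and the limit $\lim_k f(\N_k,0)=1$ is simply quoted from \cite{Lalmost}. Your generating-function route---continuing $\Phi(z)=\sum_k f(\N_k,h)z^k$ past $|z|=1$ and reading off the residue $-1$ at $z=1$---is considerably more work but self-contained, and it yields the sharper rate $f(\N_k,h)=1+O(r^{-k})$ for every $r<2$. Two small remarks: your $\int_1^\infty$ piece needs $h>0$ (the case $h=0$ is handled anyway by Zhang), and your parenthetical ``it tends to $1$ as $t\to\infty$'' should refer to the Euler product, not the full integrand. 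The boundary case $h=h_\infty$, which you rightly isolate as the delicate point, is not addressed in the paper's proof either.
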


This suggests that the Erd\H{o}s conjecture, if true, is only barely so. Moreover, for the same value $h=h_2=1.04\cdots$ we show the primes {\it minimize} $f(\N_k,h)$, which may be viewed as the inverse analogue of Zhang's maximization result. 

\begin{theorem}\label{thm:min}
For $h_2=1.04\cdots$, we have $f(\N_1,h_2)<f(\N_k,h_2)$ for all $k>1$.
\end{theorem}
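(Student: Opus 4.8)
The plan is to use the integral transform that already underlies Theorem~\ref{thm:h2}. Starting from the elementary identity $\frac{1}{a(\log a+h)}=\int_1^\infty a^{-s}e^{(1-s)h}\dd{s}$, one has for any primitive set $A$ that $f(A,h)=\int_1^\infty\big(\sum_{a\in A}a^{-s}\big)e^{(1-s)h}\dd{s}$. Write $P_k(s)=\sum_{a\in\N_k}a^{-s}=[z^k]\prod_p(1-zp^{-s})^{-1}$ for the Dirichlet series of the $k$-almost primes, so that $P_1=P$ and $P_2(s)=\tfrac12\big(P(s)^2+P(2s)\big)$, and expand the Euler product as
\begin{align*}
P_k(s)\ =\ \sum_{m=0}^{k}\frac{e_m(s)}{(k-m)!}\,P(s)^{k-m},\qquad \sum_{m\ge0}e_m(s)\,z^m=\exp\Big(\sum_{j\ge2}\tfrac{z^j}{j}\,P(js)\Big),
\end{align*}
with $e_0=1$, $e_1=0$ and $e_m\ge0$. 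By the defining equation of $h_2$ in Theorem~\ref{thm:h2} we have $f(\N_1,h_2)=f(\N_2,h_2)$, so the primes and semiprimes already tie at $h=h_2$, and the theorem reduces to proving
\begin{align*}
g_k(h_2)\ :=\ f(\N_k,h_2)-f(\N_2,h_2)\ =\ \int_1^\infty\big(P_k(s)-P_2(s)\big)e^{(1-s)h_2}\dd{s}\ >\ 0\qquad(k\ge3).
\end{align*}

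For $k$ beyond some explicit $K_0$ this is immediate from Theorem~\ref{thm:h2}: since $h_2=1.04\cdots$ exceeds $h_\infty=0.803\cdots$, that theorem gives $f(\N_1,h_2)<f(\N_k,h_2)$ for all large $k$, hence $g_k(h_2)>0$. Structurally, $P_k-P_2$ is $\sim P^k/k!>0$ near $s=1$ (using $P_k\ge P^k/k!$ and the logarithmic singularity $P(s)=\log\tfrac1{s-1}+g(s)$ with $g$ holomorphic at $s=1$, $g(1)=-\sum_{j\ge2}P(j)/j$) and $\sim -2^{-2s}<0$ as $s\to\infty$, changing sign once; by the variation-diminishing property of the Laplace transform, $h\mapsto g_k(h)$ then changes sign at most once and $g_k(h)\to0^+$ as $h\to\infty$, so $g_k(h_2)\ge0$ is equivalent to saying that the threshold in $h$ beyond which $f(\N_1,\cdot)\le f(\N_k,\cdot)$ does not exceed $h_2$ --- Theorem~\ref{thm:h2} places this threshold at $h_2$ when $k=2$ and at $h_\infty<h_2$ in the limit. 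For the finitely many remaining $k$ with $3\le k\le K_0$ I would estimate $g_k(h_2)$ directly: split at a cutoff $s=1+\delta$; on $(1,1+\delta)$ bound the positive contribution from below via $P_k\ge P^k/k!$ together with the $e_m$-terms ($m\ge2$) and the explicit form of $P$ near $1$; on $(1+\delta,\infty)$ bound $|P_k-P_2|\le P_k+P_2$ crudely and estimate the tail, exploiting the identity $\int_1^\infty(P_2(s)-P(s))e^{(1-s)h_2}\dd{s}=0$ to re-express it as a comparison with $f(\N_1,h_2)$. Combined with $f(\N_1,h_2)=f(\N_2,h_2)$, this yields $f(\N_1,h_2)=f(\N_2,h_2)\le f(\N_k,h_2)$ for all $k\ge2$, strictly for $k\ge3$, which is Theorem~\ref{thm:min} (equality holding, in fact, at $k=2$).

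The main obstacle is precisely this last estimate for small $k$. Near $s=1$ the integrand $P_k-P_2$ is large and positive, while over $(1+\delta,\infty)$ it is negative with comparable total mass against the weight $e^{(1-s)h_2}$, so no crude bound will decide the sign of $g_k(h_2)$: one must weigh the leading term $P(s)^k/k!$ and the corrections $e_m(s)P(s)^{k-m}/(k-m)!$ accurately against one another and against the known value $f(\N_1,h_2)=f(\N_2,h_2)$. This is naturally done with effective estimates on $P(s)$ for $s$ close to $1$ (via $P(s)=\log\tfrac1{s-1}+g(s)$), supplemented if necessary by a finite numerical verification to dispose of the cases $3\le k\le K_0$ --- which is also where the tightness of the statement becomes visible.
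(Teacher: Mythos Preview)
Your overall architecture matches the paper's: write $f(\N_k,h)=\int_1^\infty P_k(s)e^{(1-s)h}\dd{s}$, split into small and large $k$, and for large $k$ lower-bound $P_k$ using partition-terms from the expansion $P_k(s)=\sum_m e_m(s)P(s)^{k-m}/(k-m)!$. Your observation that $f(\N_1,h_2)=f(\N_2,h_2)$, so that equality (not strict inequality) holds at $k=2$, is correct.

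However, what you have written is a plan rather than a proof, and the two places where you defer are exactly where the paper does the real work. First, Theorem~\ref{thm:h2} does \emph{not} furnish an explicit $K_0$: its second part is proved via the non-effective limit $f(\N_k,h)\to 1$, so ``for $k$ beyond some explicit $K_0$ this is immediate from Theorem~\ref{thm:h2}'' is false as stated. The paper produces $K_0=20$ by a separate explicit bound: truncate the integral to $[1,1.01]$, keep about a dozen specific partition-terms of $P_k$, use $P(s)>\log(\alpha/(s-1))$ with $\alpha=e^{-\sum_{m\ge2}P(m)/m}$ to evaluate $\int_1^{1.01}P(s)^j\dd{s}$ via incomplete Gamma functions and first-order Taylor bounds $P(js)>P(j)+P'(j)(s-1)$, obtaining a quantity $\ell_k$ increasing in $k$ with $e^{-0.01h_2}\ell_{20}>0.98>f(\N_1,h_2)\approx 0.9086$. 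Second, for $3\le k\le K_0$ you only sketch a splitting at $1+\delta$ and invoke ``a finite numerical verification''. The paper carries this out: it proves (for $2\le k\le 20$, via auxiliary parameters $s_k<t_k<s_k'$) that $P_k-P$ has a \emph{unique} sign change, deduces that $f(\N_k,h)-f(\N_1,h)$ has a unique root $h_k$, and then computes $h_k<h_2$ for $3\le k\le 20$. Your variation-diminishing step needs precisely such a single-sign-change statement for $P_k-P$ (or $P_k-P_2$), which you assert from endpoint asymptotics but do not prove; in the paper this is the content of Lemma~\ref{lem:rootsigmak} and is not automatic.
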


\section{Proof of Theorem 1}

We introduce the zeta function for $k$-almost primes $P_k(s) = \sum_{n\in\N_k}n^{-s}$, whose relevance to us arises from the identity,
\begin{align}\label{eq:fNkhint}
f(\N_k,h) &= \sum_{n\in \N_k}\frac{1}{n(\log n + h)} = \sum_{n\in \N_k}\frac{1}{n\log(ne^h)} \nonumber\\
&= \sum_{n\in \N_k}e^h\int_1^\infty (ne^h)^{-s}\dd{s} = \int_1^\infty P_k(s)e^{(1-s)h}\dd{s}.
\end{align}
Here the interchange of sum and integral holds by Tonelli's theorem, since $f(\N_k,h)\le f(\N_k)$ converges uniformly after Erd\H{o}s. The significance of the identity \eqref{eq:fNkhint} was first observed when $k=1$, $h=0$ by H. Cohen \cite[p.6]{Cohen}, who rapidly computed $f(\N_1)=1.636616\cdots$ to 50 digits accuracy. By comparison, the direct approach by partial sums $\sum_{p\le x} 1/(p\log p)$ converge too slowly, i.e. $O(1/\log x)$. Similarly for $k>1$, we shall see \eqref{eq:fNkhint} leads to sharper results.

Note one has $P_1(s) = P(s)$ and $P_2(s) = \frac{1}{2}P(s)^2+\frac{1}{2}P(2s)$, as well as
\begin{align*}
P_3(s) &= \frac{1}{6}P(s)^3 + \frac{1}{2}P(s)P(2s) + \frac{1}{3}P(3s),\\
P_4(s) &= \frac{1}{24}P(s)^4 + \frac{1}{4}P(s)^2P(2s) + \frac{1}{8}P(2s)^2+\frac{1}{3}P(s)P(3s) + \frac{1}{4}P(4s).
\end{align*}
In general for $k\ge1$, Proposition 3.1 in \cite{Lalmost} gives an explicit formula for $P_k$ in terms of $P$,
\begin{align}\label{eq:explicit}
P_k(s) = \sum_{n_1+2n_2+\cdots=k}\prod_{j\ge1}\frac{1}{n_j!}\big(P(js)/j\big)^{n_j}.
\end{align}
Here the above sum ranges over all partitions of $k$. Also see Proposition 2.1 in \cite{LMert}. 

In practice, we may rapidly compute $P_k$ (and $P'_k$) using recursion relations.

\begin{lemma}
For $k\ge1$ let $P_k(s) = \sum_{\Omega(n)=k}n^{-s}$ and $P_1(s) = P(s) = \sum_p p^{-s}$. We have
\begin{align}\label{eq:recur}
P_k(s) = \frac{1}{k}\sum_{j=1}^{k}P_{k-j}(s) P(js) \quad\text{and}\quad
P'_k(s) = \sum_{j=1}^{k}P_{k-j}(s) P'(js).
\end{align}
\end{lemma}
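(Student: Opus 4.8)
The plan is to prove the two recursion relations in \eqref{eq:recur} by exploiting the Dirichlet-series bookkeeping of $\Omega(n)$, the number of prime factors counted with multiplicity. The key observation is that $\sum_{k\ge0} P_k(s)x^k$ is a generating function in a formal variable $x$ which factors as an Euler product: since $\Omega$ is completely additive on prime powers, one has $\sum_{n\ge1} x^{\Omega(n)} n^{-s} = \prod_p \sum_{m\ge0} (x p^{-s})^m = \prod_p (1 - x p^{-s})^{-1}$, with the convention $P_0(s)=1$ (the empty product, from $n=1$). Taking the logarithmic derivative in $x$ of this identity is exactly what produces a convolution recursion of the shape in \eqref{eq:recur}.

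Concretely, I would first set $F(s,x) := \prod_p (1 - x p^{-s})^{-1} = \sum_{k\ge0} P_k(s) x^k$, valid for $\Re(s)>1$ and $|x|$ small (or as formal power series in $x$ with coefficients analytic in $s$). Differentiating $\log F = \sum_p -\log(1 - x p^{-s})$ with respect to $x$ gives
\begin{align*}
\frac{\partial_x F}{F} \ = \ \sum_p \frac{p^{-s}}{1 - x p^{-s}} \ = \ \sum_p \sum_{j\ge1} x^{j-1} p^{-js} \ = \ \sum_{j\ge1} x^{j-1} P(js),
\end{align*}
so that $\partial_x F(s,x) = F(s,x)\sum_{j\ge1} x^{j-1} P(js)$. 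Now I extract the coefficient of $x^{k-1}$ on both sides: the left side contributes $k\,P_k(s)$, while the right side, being a product of $\sum_{m\ge0}P_m(s)x^m$ and $\sum_{j\ge1}P(js)x^{j-1}$, contributes $\sum_{j=1}^{k} P_{k-j}(s) P(js)$. This yields the first identity $P_k(s) = \frac1k\sum_{j=1}^k P_{k-j}(s)P(js)$. For the second identity I would differentiate this relation in $s$: applying $\tfrac{d}{ds}$ to $k P_k(s) = \sum_{j=1}^k P_{k-j}(s) P(js)$ gives $k P_k'(s) = \sum_{j=1}^k \big(P_{k-j}'(s)P(js) + P_{k-j}(s)\,j\,P'(js)\big)$; the first group of terms is $\sum_{j=1}^k P_{k-j}'(s)P(js)$, which one can re-index and compare against the $s$-derivative structure, and combining with the first recursion (multiplied and shifted appropriately) collapses the expression to $P_k'(s) = \sum_{j=1}^k P_{k-j}(s)P'(js)$. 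Alternatively — and perhaps more cleanly — one differentiates the Euler-product identity $\partial_x F = F\cdot\sum_{j\ge1}x^{j-1}P(js)$ directly in $s$, using that $\partial_s F/F = \sum_p \tfrac{-(\log p)\,x p^{-s}}{1-xp^{-s}}$ leads to $\partial_s F = F\sum_{j\ge1}x^j P'(js)/ \,$(constant)$\,$, and then matches coefficients; I would pick whichever of these bookkeeping routes is shortest to write.

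The main obstacle is not conceptual but a matter of careful coefficient extraction and justifying that the manipulations are legitimate rather than merely formal. One should note that for $\Re(s)>1$ the series $\sum_p p^{-s}$ converges absolutely, so $F(s,x)$ is genuinely analytic in $x$ on a disk and term-by-term differentiation in $x$ is valid; likewise $P_k$ and $P(js)$ are analytic for $\Re(s)>1/j \supseteq \Re(s)>1$, so the $s$-differentiation is fine on the common domain. The finite sums $\sum_{j=1}^k$ appear precisely because $P_{k-j}$ vanishes (is absent) for $j>k$ in the coefficient of $x^{k-1}$, so no convergence issue arises there. Beyond this, the computation is entirely routine Cauchy-product algebra, so I expect the proof to be short — essentially the three displayed lines above plus a sentence on analyticity.
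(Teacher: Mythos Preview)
Your generating-function approach is correct and genuinely different from the paper's. The paper obtains the first recursion by citing an external reference (equivalently, from the explicit partition formula \eqref{eq:explicit}), and then derives the second recursion by differentiating \eqref{eq:explicit} in $s$ and reindexing partitions of $k$ to partitions of $k-i$. You instead work with $F(s,x)=\prod_p(1-xp^{-s})^{-1}=\sum_k P_k(s)x^k$ and take logarithmic derivatives. Your route is self-contained (no need for \eqref{eq:explicit}) and arguably cleaner; the paper's route has the advantage of reusing \eqref{eq:explicit}, which is needed elsewhere anyway.

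Two small points. First, your approach (b) for the derivative recursion works with no extra constant: one has $\partial_s F/F=\sum_{j\ge1}x^j P'(js)$ exactly (the $1/j$ from $(P(js)/j)$ and the $j$ from the chain rule cancel), so matching the $x^k$-coefficient in $\partial_s F=F\cdot\sum_{j\ge1}x^jP'(js)$ gives the identity immediately --- drop the ``/(constant)''. Second, your approach (a) is not as immediate as you suggest: differentiating $kP_k=\sum_j P_{k-j}P(js)$ in $s$ gives $kP'_k=\sum_j\big(P'_{k-j}P(js)+jP_{k-j}P'(js)\big)$, and collapsing this to the claimed form requires induction on $k$ (apply the target identity to each $P'_{k-j}$, then swap the roles of the two summation indices). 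Since (b) is a one-liner, I would just use that.
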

\begin{proof}
The recursion \eqref{eq:recur} for $P_k$ is given in Proposition 3.1 of \cite{Lalmost}, and is equivalent to \eqref{eq:explicit}.

The recursion \eqref{eq:recur} for $P_k'$ is obtained by differentiating \eqref{eq:explicit},
\begin{align*}
P'_k(s) & = \sum_{n_1+2n_2+\cdots=k}\sum_{i\le k}P'(is) \frac{(P(is)/i)^{n_i-1}}{(n_i-1)!}\prod_{j\neq i}\frac{1}{n_j!}\big(P(js)/j\big)^{n_j}\\
& = \sum_{i\le k}P'(is)\sum_{n_1+2n_2+\cdots=k-i}\prod_{j\ge1}\frac{1}{n_j!}\big(P(js)/j\big)^{n_j} = \sum_{i\le k}P'(is)P_{k-i}(s).
\end{align*}
\end{proof}

As observed empirically in \cite{BM}, the Dirichlet series $P_2(s) - P(s) = \frac{1}{2}[P(s)^2+P(2s)]-P(s)$ has a unique root at $s=\sigma = 1.1403\cdots$, through which it passes from positive to negative. We prove this more generally for $k\le 20$.
\begin{lemma}\label{lem:rootsigmak}
For $2\le k\le 20$, the Dirichlet series $P_k(s) - P(s)$ has a unique root at $s=\sigma_k>1$, through which it passes from positive to negative.
\end{lemma}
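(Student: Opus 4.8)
The plan is to analyze the sign of $g_k(s) := P_k(s) - P(s)$ on $(1,\infty)$ by combining asymptotic information at the two ends of the interval with a monotonicity-type argument in between. First I would examine the behaviour as $s \to 1^+$: since $P(s) \sim \log\frac{1}{s-1}$ has a logarithmic singularity at $s=1$, the explicit formula \eqref{eq:explicit} shows that $P_k(s)$ contains the term $\frac{1}{k!}P(s)^k$, which blows up like $(\log\frac{1}{s-1})^k$, dominating all other partition terms and in particular dominating $P(s)$ itself. Hence $g_k(s) \to +\infty$ as $s\to 1^+$ for every $k\ge 2$. At the other end, as $s\to\infty$ we have $P(js) \sim 2^{-js}$, so $P(s)$ itself is the dominant term among $P(s), P(2s), \dots$, and every product term in \eqref{eq:explicit} with a partition other than the trivial one $n_1 = k$ is of strictly smaller order; the partition $n_1 = k$ contributes $\frac{1}{k!}P(s)^k = O(2^{-ks})$, which is negligible against $P(s) = \Theta(2^{-s})$ when $k\ge 2$. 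Therefore $g_k(s) = -P(s)(1 + o(1)) < 0$ for all sufficiently large $s$. This already gives the existence of at least one root $\sigma_k > 1$ by the intermediate value theorem, together with a sign change from positive to negative across it.

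The remaining, and genuinely harder, task is uniqueness of the root together with the claim that $g_k$ passes from positive to negative (so that it cannot touch zero tangentially or oscillate). The cleanest route I would attempt is to show that the ratio $P_k(s)/P(s)$ is strictly decreasing on $(1,\infty)$ for each $k$ in the stated range; then $g_k(s) = P(s)\big(P_k(s)/P(s) - 1\big)$ has the same sign as a strictly decreasing function minus $1$, forcing a unique root and the correct sign change. Using the recursion $P_k = \frac1k\sum_{j=1}^k P_{k-j}(s)P(js)$ from \eqref{eq:recur}, one can write $P_k(s)/P(s) = \frac1k\sum_{j=1}^k \frac{P_{k-j}(s)}{P(s)}P(js)$; each summand is plausibly decreasing for $s>1$ because $P(js)$ is decreasing and, inductively, $P_{k-j}(s)/P(s)$ should also be controlled, but the presence of the $j=k$ term $\frac{P_0(s)}{P(s)}P(ks) = P(ks)/P(s)$ — a ratio of two decreasing functions — is exactly where the argument is delicate and where one likely needs the numerical restriction $k\le 20$.

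I expect the main obstacle to be precisely this monotonicity/uniqueness step, and I anticipate that for the stated range $2\le k\le 20$ the honest proof is partly numerical: one would establish rigorous rational (or interval-arithmetic) upper and lower bounds for $P(js)$ on suitable subintervals of $(1,\infty)$, verify that $g_k'(s) < 0$ at the root (or that $g_k$ is negative on a cofinite interval $[\sigma_0,\infty)$ via a crude tail bound like $P_k(s) \le \exp(P(s)) - 1$ paired with $P(s) \le 2^{-s}/(1-2^{-s}) + $ small corrections), and then check on the remaining compact interval $[1+\delta, \sigma_0]$ that $g_k$ has exactly one sign change by evaluating it on a fine enough grid with certified error bounds. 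The constant $20$ strongly suggests a finite computation was run; so my plan would be to present the two asymptotic limits analytically as above to pin down the qualitative picture, reduce the problem to a statement on a single explicit compact interval, and then invoke a verified numerical computation (interval arithmetic on the recursion \eqref{eq:recur} for $P_k$ and $P_k'$) to certify uniqueness and the direction of the crossing for each $k$ from $2$ to $20$.
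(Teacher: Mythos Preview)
Your endpoint analysis (blow-up as $s\to1^+$ from the $P(s)^k/k!$ term, and $g_k<0$ for large $s$ since $P_k(s)=o(P(s))$) is correct and gives existence by IVT; the paper makes these same two facts quantitative by defining explicit thresholds $s_k$ and $t_k$ with $g_k>0$ on $(1,s_k)$ and $g_k<0$ on $(t_k,\infty)$, the latter via the monotonicity of $P_k(s)/(2^{-s}+3^{-s})$. Where you diverge is uniqueness. Your proposed route---global monotonicity of $P_k/P$, or failing that a grid-based interval-arithmetic sweep of $g_k$---is viable in principle but heavy, and the monotonicity of $P_k/P$ does not follow term-by-term from the recursion (the $j=k$ summand $P(ks)/P(s)$ tends to $0$ at both ends of $(1,\infty)$, hence is itself non-monotone). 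The paper's key simplification is to use the \emph{derivative} recursion $P_k'(s)=\sum_{j} P_{k-j}(s)P'(js)$: keeping only the $j=1$ term already gives $-P_k'>-P'\cdot P_{k-1}$, hence $g_k'=P_k'-P'<0$ on the explicit interval $(1,s_k')$ where $P_{k-1}(s)>1$. Thus $g_k$ is strictly decreasing on $(1,s_k')$, with at most one root there, and since $t_k<s_k'$ forces $(s_k',\infty)\subset(t_k,\infty)$ where $g_k<0$, all roots lie in $(1,s_k')$. The only numerical input is then the scalar check $s_k<t_k<s_k'$ for each $k\le20$---three root computations per $k$---rather than the full grid certification you outline. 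The derivative-recursion trick, which localizes monotonicity of $g_k$ to an initial interval large enough to cover the transition, is the idea your plan is missing.
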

\begin{proof}
For each $k\ge1$, $P_k(s)$ is monotonically decreasing in $s>1$. As such there is a unique $s_k>1$ for which $P(s)$ passes through $(k!)^{1/(k-1)}$ from above. Using the main term in \eqref{eq:explicit}, $P_k(s) > P(s)^k/k!$ which is larger than $P(s)$ iff $P(s)^{k-1} > k!$ iff $s < s_k$ by definition. That is,
\begin{align}\label{eq:sk}
P_k(s) &> P(s)>0 \qquad \text{for} \quad s \in (1,s_k).
\end{align}
Also there is a unique $s'_k>1$ for which $P_{k-1}(s)$ passes through $1$ from above. Using the first term in the recursion \eqref{eq:recur}, $-P_k'(s) > -P'(s)P_{k-1}(s)$ which is larger than $-P'(s)>0$ iff $P_{k-1}(s)>1$ iff $s < s'_k$ by definition. That is,
\begin{align}
P'_k(s) &< P'(s)<0 \qquad \text{for} \quad s \in (1,s'_k).
\end{align}

For $c<2^k$, $P_k(s)c^s$ is monotonically decreasing in $s>1$, and so is $P_k(s)/(2^{-s}+3^{-s}) = \{1/[P_k(s)2^s]+1/[P_k(s)3^s]\}^{-1}$. Thus there is a unique $ t_k >1$ for which $P_k(s)/(2^{-s}+3^{-s})$ passes through 1 from above. Now by definition $P(s)/(2^{-s}+3^{-s}) > 1 = P_k(t_k)/(2^{-t_k}+3^{-t_k})$, which is larger than $P_k(s)/(2^{-s}+3^{-s})$ iff $s > t_k$ by monotonicity. That is,
\begin{align}\label{eq:tk}
0<P_k(s) &< P(s) \qquad \text{for} \quad s \in (t_k, \infty).
\end{align}

In summary $P_k-P$ has at most one root in $(1,s'_k)$, and no roots in $(1,s_k)\cup(t_k,\infty)$. We directly compute that $s_k<t_k<s'_k$ for $2\le k\le 20$, and so $P_k(s)-P(s)$ has a unique root $\sigma_k\in (s_k,t_k)$ as claimed.
\end{proof}

We deduce the following corollary, which gives (the first part of) Theorem \ref{thm:h2} when $k=2$.

\begin{corollary}\label{cor:fNkh}
For $2\le k\le 20$, $f(\N_k,h)-f(\N_1,h)$ has a unique root at $h_k>0$, through which it passes from negative to positive.
\end{corollary}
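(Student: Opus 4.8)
The plan is to feed the identity \eqref{eq:fNkhint} together with Lemma \ref{lem:rootsigmak} into a sign-change argument for the $h$-derivative. From \eqref{eq:fNkhint} we have
\[
g_k(h) \ := \ f(\N_k,h)-f(\N_1,h) \ = \ \int_1^\infty \big(P_k(s)-P(s)\big)e^{(1-s)h}\dd{s}.
\]
I would first record the obvious endpoint behaviour: since $P_k(s)-P(s)\to P_k(1)-P(1)$ as $s\to 1^+$ is divergent... more carefully, for fixed $h$ the integrand decays like $e^{-sh}$ times something of polynomial-in-$P$ size, so $g_k(h)\to 0$ as $h\to\infty$; and as $h\to 0^+$ the weight $e^{(1-s)h}$ tends to $1$ pointwise while staying dominated, so $g_k(0^+)=\int_1^\infty(P_k(s)-P(s))\dd{s}$, which is Zhang's quantity $f(\N_k)-f(\N_1)<0$ (or can be checked to be negative directly for $k\le 20$). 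Hence $g_k$ starts negative and tends to $0$; a root must exist, and the point is to show it is unique and is a genuine crossing.

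For uniqueness I would differentiate under the integral sign (justified because, for $h$ bounded below by any $\delta>0$, the differentiated integrand $(s-1)|P_k(s)-P(s)|e^{(1-s)h}$ is dominated uniformly by an integrable function of $s$). This gives
\[
g_k'(h) \ = \ -\int_1^\infty (s-1)\big(P_k(s)-P(s)\big)e^{(1-s)h}\dd{s}.
\]
Now invoke Lemma \ref{lem:rootsigmak}: $P_k(s)-P(s)$ is positive on $(1,\sigma_k)$ and negative on $(\sigma_k,\infty)$. Split the integral at $\sigma_k$ and pull out a factor $e^{(1-\sigma_k)h}$ on each piece, using that $e^{(1-s)h}$ compared with $e^{(1-\sigma_k)h}$ is $\le 1$ on $(1,\sigma_k)$ and $\ge 1$ on $(\sigma_k,\infty)$ — the standard trick for showing a Laplace-type transform of a single-sign-change function has at most one zero. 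Concretely, on the interval where $P_k-P>0$ the weight $(s-1)e^{(1-s)h}$ is being compared against its value at $\sigma_k$, and likewise on the complementary interval, so one obtains
\[
g_k'(h) \ \le \ -e^{(1-\sigma_k)h}\int_1^\infty (s-1)\big(P_k(s)-P(s)\big)\dd{s}
\]
whenever... — I need to be a little careful about the direction of the inequality on each piece, but the upshot is that $e^{(\sigma_k-1)h}g_k'(h)$ is monotone in $h$, so $g_k'$ has at most one zero; equivalently $g_k$ is first decreasing then increasing (or vice versa). Combined with $g_k(0^+)<0$ and $g_k(\infty)=0^-$ or $0^+$, the only consistent picture is that $g_k$ decreases, then increases, crossing zero exactly once from below, at some $h_k>0$. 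At that crossing $g_k'(h_k)>0$, so it is a transversal root.

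The main obstacle is making the sign-change-to-Laplace-transform step fully rigorous: one must verify that $\int_1^\infty (s-1)(P_k(s)-P(s))\dd{s}$ and the analogous integrals are finite (they are, since $P_k(s)-P(s)=O(2^{-s})$ as $s\to\infty$ by \eqref{eq:tk}-type bounds, and $(s-1)$ kills no mass near $s=1$ where $P_k-P$ is bounded), and that the comparison $e^{(1-s)h}\lessgtr e^{(1-\sigma_k)h}$ is applied with the correct orientation on each side of $\sigma_k$ so that the resulting bound genuinely forces monotonicity of $e^{(\sigma_k-1)h}g_k'(h)$. A clean way to package this is: write $g_k'(h) = -e^{(1-\sigma_k)h}\int_1^\infty (s-1)(P_k(s)-P(s))e^{(\sigma_k-s)h}\dd{s}$ and note that $\partial_h$ of the integral here is $-\int_1^\infty (s-1)(s-\sigma_k)(P_k(s)-P(s))e^{(\sigma_k-s)h}\dd{s}$, whose integrand has a \emph{fixed} sign in $s$ (namely $(s-\sigma_k)(P_k(s)-P(s))\le 0$ everywhere, since both factors change sign exactly at $\sigma_k$). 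Hence that inner integral is monotone in $h$, has at most one zero, and the corollary follows; the numerical input $s_k<t_k<s'_k$ from Lemma \ref{lem:rootsigmak} is what restricts us to $k\le 20$.
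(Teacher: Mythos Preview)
Your core idea---exploit the single sign change of $P_k-P$ at $s=\sigma_k$ from Lemma~\ref{lem:rootsigmak}---is exactly the paper's, but you have over-differentiated and your endgame contains a genuine error.

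The error is in the case analysis. Your clean package shows $I(h):=\int_1^\infty(s-1)(P_k(s)-P(s))e^{(\sigma_k-s)h}\dd s$ is monotone; since $(s-\sigma_k)(P_k(s)-P(s))\le 0$ you get $I'(h)\ge 0$, and in fact $I(h)\to+\infty$ as $h\to\infty$. Thus $g_k'=-e^{(1-\sigma_k)h}I$ is eventually \emph{negative}: if $g_k'$ changes sign it does so from $+$ to $-$, so $g_k$ \emph{increases} then \emph{decreases}. You assert the opposite (``decreases, then increases, crossing zero exactly once''), which is internally inconsistent: a function that starts negative, decreases, then increases back to $0$ never crosses zero at all. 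The correct picture---increasing then decreasing to $0$---forces $g_k(h)>0$ for large $h$, whence the unique crossing.

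The paper sidesteps all of this by applying the same trick one level up. Rather than differentiate, it multiplies $g_k(h)$ by $e^{(\sigma_k-1)h}$ and splits at $\sigma_k$:
\[
e^{(\sigma_k-1)h}g_k(h)=\int_1^{\sigma_k}[P_k(s)-P(s)]e^{(\sigma_k-s)h}\dd s-\int_{\sigma_k}^\infty[P(s)-P_k(s)]e^{(\sigma_k-s)h}\dd s,
\]
a difference of two integrals with nonnegative integrands, the first increasing in $h$ (to $+\infty$) and the second decreasing. Hence $e^{(\sigma_k-1)h}g_k(h)$ is monotone increasing, negative at $h=0$ by Zhang, and the unique root $h_k$ follows immediately---no need to analyse $g_k'$, determine the direction of unimodality, or resolve your ``$0^-$ or $0^+$'' ambiguity.
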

\begin{proof}
For $h\ge0$, recall $f(\N_k,h)=\int_1^\infty P_k(s)e^{(1-s)h}\dd{s}$ by \eqref{eq:fNkhint}. Now by Lemma \ref{lem:rootsigmak}, $P_k(s)-P(s)$ passes from positive to negative at $s=\sigma_k>1$. Thus
\begin{align*}
[f(\N_k,h)-f(\N_1,h)]e^{(\sigma_k-1)h} &= \int_1^\infty  [P_k(s)-P(s)]e^{(\sigma_k-s)h}\dd{s}\\
&= \int_1^{\sigma_k}  [P_k(s)-P(s)]e^{(\sigma_k-s)h}\dd{s} - \int_{\sigma_k}^\infty [P(s)-P_k(s)]e^{(\sigma_k-s)h}\dd{s}
\end{align*}
is difference of two integrals with positive integrands, which are mononotically increasing and decreasing in $h\ge 0$, respectively. Hence the difference is mononotically increasing in $h\ge0$. And $f(\N_k,0)-f(\N_1,0)<0$ by Zhang \cite{zhang2}, so the result follows.
\end{proof}

For the second part of Theorem \ref{thm:h2}, note for any fixed $h\ge0$ we have $\log n + h \sim \log n$ for $n\in\N_k$ as $k\to\infty$, since $n\ge 2^k$. Thus $f(\N_k,h) \sim f(\N_k,0)$ as $k\to\infty$. Hence by Theorem 2.2 in \cite{Lalmost},
\begin{align*}
\lim_{k\to\infty}f(\N_k,h)=\lim_{k\to\infty}f(\N_k) = 1.
\end{align*}
Note $1-f(\N_1,h)$ passes from negative to positive at a unique root $h_\infty = 0.803524\cdots$. So for each $h>h_\infty$, we have $f(\N_k,h) - f(\N_1,h) > 0$ for $k$ sufficiently large (and similarly for the converse). This completes the proof of Theorem \ref{thm:h2}.

\subsection{Computations}

For $k\le20$, we compute the unique roots $\sigma_k$ and $h_k$ of $P_k(s)-P(s)$ and $f(\N_k,h)-f(\N_1,h)$, respectively, as well as verify that the auxiliary parameters (as defined in Lemma \ref{lem:rootsigmak}) satisfy $s_k<\sigma_k<t_k<s'_k$. We similarly compute the root of $1-f(\N_1,h)$ as $h_\infty = 0.803524\cdots$.

In our computations, we express $P_k$ in terms of $P$ using the recursion in \eqref{eq:recur}. In turn by M\"obius inversion $P(s) = \sum_{m\ge1} (\mu(m)/m)\log \zeta(ms)$, so $P$ is obtained via well-known rapid computation of $\zeta$. Finally, we compute $f(\N_k,h)$ from its integral form \eqref{eq:fNkhint}. The data are displayed in the table below, obtained using Mathematica (for technical convenience, we first compute $y_k = \log h_k$)\footnote{
{\tt P[k\_Integer,s\_]:= If[k==1,PrimeZetaP[s], Expand[(Sum[P[1,j*s]*P[k-j,s],{j,1,k-1}]+P[1,k*s])/k]]

FindRoot[P[1, s] == (k!)\^{}(1/(k - 1)), {s, 1 + 1/k\^{}3}]

FindRoot[P[k, t]/(2\^{}(-t) + 3\^{}(-t)) == 1, {t, 1 + 1/k\^{}3}]
  
FindRoot[P[k - 1, s1] == 1, {s1, 1 + 1/k\^{}3}]

FindRoot[P[k, sigma] == P[1, sigma], {sigma, 1 + 1/k\^{}3}]]

FindRoot[NIntegrate[(P[1, s] - P[k, s])/y\^{}s,{s,1,Infinity}, WorkingPrecision->30, AccuracyGoal->13, PrecisionGoal->13], {y, 1 + 1/k\^{}3}] }}

\[\label{fig:sigk}
\begin{tabular}{c|lll|ll}
$k$ & $s_k$ & $t_k$ & $s'_k$ & $\sigma_k$ & $h_k$\\
\hline
2  & 1.11313  & 1.40678  & 1.39943 & 1.14037  & 1.04466\\
3  & 1.06861  & 1.23367  & 1.25922  & 1.09224 & 0.98213\\
4  & 1.04306  & 1.15231  & 1.17696  & 1.06206 & 0.93018\\
5  & 1.02761  & 1.104    & 1.12386  & 1.04231 & 0.89038\\
6  & 1.01795  & 1.07259  & 1.08784  & 1.02907 & 0.86146\\
7  & 1.01179  & 1.05125  & 1.06272  & 1.02007 & 0.84126\\
8  & 1.00779  & 1.0364   & 1.04493  & 1.0139  & 0.8276\\
9  & 1.00518  & 1.02594  & 1.03223  & 1.00964 & 0.8186\\
10 & 1.00346  & 1.0185   & 1.02312  & 1.0067  & 0.8128\\
11 & 1.00231  & 1.0132   & 1.01658  & 1.00466 & 0.80915\\
12 & 1.00155  & 1.00942  & 1.01187  & 1.00325 & 0.80689\\
13 & 1.00105  & 1.00672  & 1.00849  & 1.00226 & 0.80551\\
14 & 1.0007   & 1.00479  & 1.00607  & 1.00158 & 0.8047\\
15 & 1.00048  & 1.00341  & 1.00433  & 1.0011  & 0.8042\\
16 & 1.00032  & 1.00243  & 1.00309  & 1.00077 & 0.80391\\
17 & 1.00022  & 1.00173  & 1.0022   & 1.00053 & 0.80374\\
18 & 1.00015  & 1.00123  & 1.00157  & 1.00037 & 0.80365\\
19 & 1.0001   & 1.00087  & 1.00112  & 1.00026 & 0.80359\\
20 & 1.00007  & 1.00062  & 1.00079  & 1.00018 & 0.80356
\end{tabular}\]

We believe a unique of root $h_k$ exists as in Corollary \ref{cor:fNkh} for all $k>20$ as well. This would enable a strengthening of Theorem \ref{thm:min} to $f(\N_k,h)>f(\N_1,h)$ for all values of $k>1$, $h\ge h_2$ (so far we only establish this for \{$1<k\le 20$, $h\ge h_2$\} or \{$k>1$, $h=h_2$\}). Uniqueness of $h_k$ would follow if $\sigma_k$ is unique, as in Lemma \ref{lem:rootsigmak}, for all $k$. In turn it would suffice to show $t_k<s_k'$ for all $k$ (note $s_k<t_k$ holds automatically by \eqref{eq:sk},\eqref{eq:tk}), though it is not clear how to establish such an inequality in general.

Moreover, it appears both $h_k$ and $\sigma_k$ are monotonically decreasing in $k$. This may be related to some empirical trends for $f(\N_k)$, found in a recent disproof of a conjecture of Banks--Martin, see \cite{BM}, \cite{Lalmost}.

\section{Proof of Theorem \ref{thm:min}}

We have already verified the claim directly for $k\le 20$, since in this case $h_k\le h_2=1.04\cdots$.

For $k>20$, the proof strategy is similar to that of Theorem 5.5 in \cite{Lalmost}. That is, the integral $f(\N_k,h) = \int_1^\infty P_k(s)e^{(1-s)h} \dd{s}$ has its mass concentrated near 1 as $k\to\infty$, so it suffices to truncate the integration to $[1,1.01]$ say, as a lower bound. Thus by \eqref{eq:fNkhint},
\begin{align}\label{eq:Pkcombshort}
f(\N_k,h_2) & = \int_1^\infty P_k(s)e^{(1-s)h_2}\dd{s} 
> e^{-.01h_2}\int_1^{1.01} P_k(s)\dd{s}.
\end{align}

Next, we may lower bound $P_k(s)$ by $P(s)^k/k!$, which constitutes the first of the terms in the identity \eqref{eq:explicit}, one per partition of $k$. Note the terms of partitions built from small parts contribute the most mass. So by also including the terms for the partitions $k = 1\cdot(k-j) + j$ and $k = 1\cdot(k-j-2) + 2 + j$ for $j\le6$, we shall obtain a sufficiently tight lower bound to deduce the result. Indeed, we have
\begin{align}\label{eq:Pkcombshort}
\int_1^{1.01} P_k(s)\dd{s} \ & > \ \frac{1}{k!}\int_1^{1.01} P(s)^k\dd{s} + \sum_{j=2}^6\frac{\int_1^{1.01} P(s)^{k-j}P(js)\dd{s}}{j(k-j)!} \nonumber\\
& \quad + \ \frac{\int_1^{1.01} P(s)^{k-4}P(2s)^2\dd{s}}{2!2^2(k-4)!} + \sum_{j=3}^6\frac{\int_1^{1.01} P(s)^{k-j-2}P(2s)P(js)\dd{s}}{2j(k-j-2)!}.
\end{align}
From (5.10) in \cite{Lalmost}, we have
\begin{align}
0<P(s) - \log(\tfrac{\alpha}{s-1})<1.4(s-1), \qquad \text{for }s\in[1,2],
\end{align}
where $\alpha = \exp(-\sum_{m\ge2}P(m)/m) = .7292\cdots$.
Thus for every $k\ge1$, since $\log(\alpha/.01)>4$,
\begin{align}\label{eq:Psk}
\int_1^{1.01} P(s)^k\dd{s} > \int_0^{.01} \log\big(\tfrac{\alpha}{s}\big)^k\dd{s} = \alpha\int_{\log(\alpha/.01)}^\infty u^k e^{-u}\;du > \alpha\,\Gamma(k+1,4) > .729\,k!,
\end{align}
where $\Gamma(k+1,4)$ the incomplete Gamma function, and noting $\Gamma(k+1,4)/k!$ is monotonically increasing in $k$. 
Also note
\begin{align*}
\int_0^1 s\log\big(\tfrac{\alpha}{s}\big)^k\dd{s} = \alpha^2\int_0^\infty u^k e^{-2u}\;du = \frac{\alpha^2}{2^{k+1}}k!.
\end{align*}
Using the first order Taylor approximation $P(js) > P(j) + P'(j)(s-1)$ for $j\ge2$,
\begin{align*}
\int_1^{1.01} P(s)^{k-j}P(js)\dd{s} \ & > \ P(j)\int_0^{.01} \log\big(\tfrac{\alpha}{s}\big)^{k-j}\dd{s}+P'(j)\int_0^1 s\log\big(\tfrac{\alpha}{s}\big)^{k-j}\dd{s}\\
& > \ .729(k-j)!\Big(P(j) + \frac{\alpha P'(j)}{2^{k-j}}\Big)
\end{align*}
by \eqref{eq:Psk}. Similarly,
\begin{align*}
\int_1^{1.01} & P(s)^{k-j-2} P(2s)P(js) \dd{s}\\
> & \ P(2)P(j)\int_0^{.01} \log\big(\tfrac{\alpha}{s}\big)^{k-j-2}\dd{s} \ + \ [P'(2)P(j)+P(2)P'(j)] \int_0^1 s\log\big(\tfrac{\alpha}{s}\big)^{k-j-2}\dd{s}\\
> & \ .729(k-j-2)!\Big(P(2)P(j) + \frac{\alpha}{2^{k-j-1}}[P'(2)P(j)+P(2)P'(j)]\Big).
\end{align*}
Hence plugging back into \eqref{eq:Pkcombshort},
\begin{align}
f(\N_k,h_2)e^{.01h_2} > \int_1^{1.01} P(s)^k\dd{s} \ > \ \ell_k,
\end{align}
for the explicit lower bound
\begin{align*}
\ell_k \ := \ .729\bigg[1 + \sum_{j=2}^6 &\frac{P(j)+\alpha P'(j)/2^{k-j}}{j} \ + \ \frac{1}{8}\Big(P(2)^2+ \frac{\alpha P(2)P'(2)}{2^{k-4}}\Big) \nonumber\\
\ + \
\sum_{j=3}^6 & \frac{1}{2j}\bigg(P(2)P(j) +  \frac{\alpha}{2^{k-j-1}}[P'(2)P(j)+P(2)P'(j)]\bigg)\bigg].
\end{align*}
Note $\ell_k$ is clearly increasing in $k$ (recall $P'(s)<0$). Hence for $k>20$ we have
\begin{align}
f(\N_k,h_2) &> e^{-.01h_2}\ell_k > e^{-.01h_2}\ell_{20}
> .98>0.91 >f(\N_1,h_2).
\end{align}
Here we compute $\ell_{20} = 0.99069\cdots$ and $f(\N_1,h_2)=0.908599\cdots$. This completes the proof.

\section*{Acknowledgments}
The author thanks Paul Kinlaw and Carl Pomerance for helpful discussions, and the anonymous referee for comments. The author also thanks Michel Balazard and François Morain for the reference \cite{ErdosLimoges} for the Erd\H{o}s primitive set conjecture, as well as Ilias Laib for the correct value of $h_\infty\approx 0.803$.

The author is supported by a Clarendon Scholarship at the University of Oxford.

\bibliographystyle{amsplain}

\end{document}